\documentclass[11pt,a4paper,leqno]{amsart}
\usepackage{common}

\setcounter{tocdepth}{4}
\setcounter{secnumdepth}{2}

\title{Note on the compactness of commutators}


\author[Tuomas Oikari]{Tuomas Oikari}
\address[T.O.]{Department of mathematics and statistics, P.O.B. 68 (Pietari Kalmin katu 5), FI-00014 University of Helsinki, Finland}
\email{tuomas.v.oikari@helsinki.fi}

\thanks{T.O. thanks Emiel Lorist for discussions on compact operators and the anonymous referee for valuable comments that improved the clarity and presentation of the article. T.O. was supported by the Finnish Academy of Sciences and Letters and the Research Council of Finland (project no. 358180).}

\date{\today}

\keywords{compactness, extrapolation of compactness, commutator, singular integral operator}

\subjclass[2020]{42B20, 42B35, 42B37,47B47}
\begin{document}

	\maketitle

\begin{abstract}
	 The optimal sufficient conditions for the $L^p$-to-$L^q$ compactness of commutators of singular integral operators of both Calderón-Zygmund and of rough type are shown
	  in the different exponent ranges $``q>p"$, $``q=p"$ and $``q<p"$ to quickly follow from each other. 
	  The approach is through classical compactness interpolation methods. We also present a new elementary ``off-diagonal to diagonal'' extrapolation principle for the compactness of commutators of linear operators, which is of independent interest.
\end{abstract}

\section{Introduction}

\allowdisplaybreaks

The boundedness of commutators $[b,T]f := bTf-T(bf)$ of singular integral operators (SIOs) $T$ and functions $b:\R^n\to\C$ (the symbol of the commutator) has been an active research area since the classical work of Nehari \cite{Nehari1957}. By now it is well understood that the symbol heavily influences the operator theoretic properties of the commutator. The off-diagonal characterization of boundedness recently culminated in the following result. For $p,q\in (1,\infty),$ which we assume throughout the article, and a wide class of bounded SIOs $T,$ there holds that 
\begin{align}\label{eq:comm:bdd}
	\|[b,T]\|_{L^p(\R^n)\to L^q(\R^n)} \lesssim	\| b\|_{X^{p,q}(\R^n)},\quad 	X^{p,q}(\R^n) = \begin{cases}
		\BMO^{\alpha(p,q)}(\R^n) & q \geq  p, \\
		\dot{L}^{r(p,q)}(\R^n),  & q<p,
	\end{cases}
\end{align}
where the exponents are defined through the relations 
\begin{align*}
	\frac{\alpha(p,q)}{n} = \frac{1}{p}-\frac{1}{q},\qquad \frac{1}{q}= \frac{1}{r(p,q)}+ \frac{1}{p},
\end{align*}
and the function spaces  in the scale $X^{p,q}(\mathbb{R}^n)$ through the seminorms
\begin{align}\label{eq:BMOalpha}
	\|b\|_{\op{BMO}^{\alpha}(\mathbb{R}^n)} = \sup_{Q} \ell(Q)^{-\alpha}\fint_Q|b-\ave{b}_Q|,\qquad 	\|b\|_{\dot{L}^r(\mathbb{R}^n)} = \inf_{c}\| f-c\|_{L^r(\mathbb{R}^n)}.
\end{align}
Above, the supremum is taken over all cubes, $\ell(\cdot)$ stands for side length and $\fint_{(\cdot)}b, \ave{b}_{(\cdot)}$ both denote the integral average. 
When $\alpha = 0,$ the classical space of bounded mean oscillations $\op{BMO}(\mathbb{R}^n)$ is recovered, while for $\alpha>0,$ there holds that $\BMO^{\alpha}(\mathbb{R}^n) = \dot{C}^{0,\alpha}(\R^n)$ is the H\"older space, see e.g. Meyers \cite{Mey1964}. 
Emphasizing the recognition of the correct conditions on the symbol, the result \eqref{eq:comm:bdd} is due to Nehari \cite{Nehari1957} ($n=1$) and Coifman et al. \cite{CRW} ($n\geq 2$), when $q=p;$ due to Janson \cite{Jan1978}, when $q>p;$ and due to Hyt\"{o}nen \cite{HyLpq2021}, when $q<p.$
The bounds in \eqref{eq:comm:bdd} are optimal, meaning that the upper bound $(\lesssim)$ is also a lower bound $(\gtrsim)$, if the kernel of the multiscale operator $T$ is non-degenerate on all scales and in all locations, see Hyt\"onen \cite{HyLpq2021} for a formulation of a sufficient amount of non-degeneracy.

If $\alpha(p,q) > 1,$ then $b\in \dot C^{0,\alpha(p,q)}(\R^n)$ results in zero Lipschitz constant, hence the symbol is constant and the commutator is the zero operator. Therefore only the range $\alpha(p,q)\leq 1$ is interesting to study.
When $\alpha(p,q) = 1,$ it turned out in Guo et al. \cite[Theorems 1.7. \& 1.8.]{GHWY21} that the commutator is compact only if the symbol is constant, again leading to the zero operator. Compactness in the remaining range $\alpha(p,q)<1,$ which we begin to assume now, can be formulated as follows. There holds that 
\begin{align}\label{eq:comm:comp}
	[b,T]\in\calK(L^{p}(\R^n), L^q(\R^n)) \Longleftarrow b\in Y^{p,q}(\R^n) := \overline{C^{\infty}_c(\R^n)}^{X^{p,q}(\R^n)},
\end{align}
for the same classes of SIOs $T$ for which \eqref{eq:comm:bdd} is known to hold.
Above $\calK$ stands for the class of compact operators between the given spaces. 
The implication \eqref{eq:comm:comp} is due to Uchiyama \cite{Uch1978}, when $p=q$; due to Guo, He, Wu and Yang \cite{GHWY21}, when $q>p;$ and due to Hyt\"{o}nen, Li, Tao and Yang \cite{HLTY2023}, when $q<p.$ 
Moreover, the result \eqref{eq:comm:comp} is optimal, meaning that the reverse implication $(\Longrightarrow)$ holds, provided the kernel of $T$ is appropriately large, again see \cite{HyLpq2021}.

The line \eqref{eq:comm:comp} states the commutators compactness under nice approximability of the symbol in the corresponding class for boundedness. This can be further characterized as
\begin{align}\label{eq:Ypq=Testing}
 	Y^{p,q}(\R^n) = \begin{cases}
 		\VMO^{\alpha(p,q)}(\R^n) & q \geq  p, \\
 		\dot{L}^{r(p,q)}(\R^n),  & q<p.
 	\end{cases}
\end{align}
The left-hand side of \eqref{eq:Ypq=Testing} is better for proving that $b\in Y^{p,q}(\mathbb{R}^n)$  is sufficient for $[b,T]\in\mathcal{K}(L^p(\mathbb{R}^n),L^q(\mathbb{R}^n))$, whereas the right-hand side is better for the necessity direction. 
When $q=p$ the result \eqref{eq:Ypq=Testing} is in \cite{Uch1978};  when $q>p$ in \cite{GHWY21}, see also \cite{MudOik24} for an alternative approach; and when $q<p$ it is the density of $C^{\infty}_c(\mathbb{R}^n)$ in $L^r(\mathbb{R}^n),$ for $r\in [1,\infty).$ 
For the readers convenience we recall that $b\in \op{VMO}^{\alpha}(\mathbb{R}^n)$ if
\begin{align*}
   \lim_{|Q| \to \{0,\infty\} } 	\ell(Q)^{-\alpha}\fint_Q|b-\ave{b}_Q| + \lim_{ \dist(Q,0) \to\infty} 	\ell(Q)^{-\alpha}\fint_Q|b-\ave{b}_Q| = 0.
\end{align*}

The shared dense subspace $C^{\infty}_c(\mathbb{R}^n)$ of all $Y^{p,q}(\mathbb{R}^n)$ (for $\alpha(p,q)<1$) hints of a connection between the sufficiency characterizations in the different exponent ranges on the line \eqref{eq:comm:comp}. 
The working machine behind is the following classical Theorem \ref{thm:InterComp} due to Krasnosel'skii \cite{Kras60} on the interpolation of compact operators. 
\begin{thm}[\cite{Kras60}]\label{thm:InterComp} Let $p_i,q_i \in (1,\infty)$ and $T$ be a linear operator that is $L^{p_0}(\mathbb{R}^n)\to L^{q_0}(\mathbb{R}^n)$ compact and $L^{p_1}(\mathbb{R}^n)\to L^{q_1}(\mathbb{R}^n)$ bounded. Let $\theta\in (0,1)$ be arbitrary and set $1/p := \theta/p_1 + (1-\theta)/p_2$ and $1/q := \theta/q_0 + (1-\theta)/q_1.$ Then, $T$ is $L^{p}(\mathbb{R}^n)\to L^{q}(\mathbb{R}^n)$ compact.
\end{thm}
Using the identity \eqref{eq:Ypq=Testing}, stating the existence of a common shared dense subspace (i.e. of $C^{\infty}_c(\mathbb{R}^n)$), coupled with Theorem \ref{thm:InterComp},
we show in this article that the result in Guo et al. \cite{GHWY21} ($``q>p"$), in Uchiyama \cite{Uch1978} ($``q=p"$) and in Hyt\"onen et al. \cite{HLTY2023} ($``q<p"$) all follow rather immediately from each other, the simple proofs are contained in Section \ref{sect:apply}.

\subsection{An elementary compactness extrapolation principle for commutators}
Theorem \ref{thm:InterComp} can be seen to be somewhat heavy and it is interesting to ask if there exists a more elementary proof of the aforementioned implications.
Towards this goal we provide a simple ``off-diagonal to diagonal'' compactness extrapolation principle for commutators of linear operators, see Lemma \ref{lem:extranice} below, that allows us to deduce compactness in the range $``q\geq p"$ from the range $``q\leq p",$ giving, in particular, an elementary proof that the result in Hyt\"onen et al. \cite{HLTY2023} ($``q<p"$) follows from Uchiyama \cite{Uch1978} ($``q=p"$), which further follows from Guo et al. \cite{GHWY21} ($``q>p"$).
Moreover, after the discovery of Lemma \ref{lem:extranice} in the present article it was applied in the context of bi-commutators in \cite[Theorem 4.6.]{MaOi24}. 

	Since the exact choice of the function spaces and the linear operator involved is not important, we provide a small generalization by fixing the underlying data as follows.
	Consider $X^{s,t},$ a given scale of function spaces,  $Z \subset \cap X^{s,t}$ a common subset with the property that if $f\in Z,$ then $|\supp(f)|< \infty,$ and let $Y^{s,t}$ stand for the closure of $Z$ with respect to $X^{s,t}.$ 
\begin{lem}[Off-diagonal to diagonal compactness extrapolation principle for commutators]\label{lem:extranice}  
	Fix $p,q\in (0,\infty).$ Let $T$ be a linear operator so  that
	\begin{align}\label{eq:extrap1}
			\|[f,T]\|_{L^p\to L^q} \lesssim \| f\|_{X^{p,q}},  \text{ for all } f\in X^{p,q},
	\end{align}
	and suppose that there exist exponents $e\in [q,\infty]$ and $u\in (0,p]$ so that
	 \begin{align}\label{eq:extrap2}
	 	[f, T] \in \mathcal{K}(L^p, L^e) \cap \mathcal{K}(L^u,L^q), \text{ for all } f\in Z.
	 \end{align}
 Then, there holds that $[b,T]\in \mathcal{K}(L^p, L^q),$ provided that $b\in Y^{p,q}.$
\end{lem}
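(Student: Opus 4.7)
The plan has two stages: first reduce to the case $b = f \in Z$ by approximation, then establish compactness for each $f \in Z$ via a splitting of the input along $\supp f$ that matches the two hypotheses in \eqref{eq:extrap2}. (I take the conclusion to read $[b,T] \in \mathcal{K}(L^p,L^q)$; this is the form used in the two applications described just above the lemma, and the $\mathcal{K}(L^q,L^q)$ appears to be a typographical oversight.)

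For the reduction, I would pick $f_n \in Z$ with $\|f_n - b\|_{X^{p,q}} \to 0$; applying \eqref{eq:extrap1} to $f_n - b$ gives
\[
	\|[f_n,T] - [b,T]\|_{L^p \to L^q} \lesssim \|f_n - b\|_{X^{p,q}} \to 0.
\]
Since $\mathcal{K}(L^p,L^q)$ is closed under operator-norm limits, it then suffices to show $[f,T] \in \mathcal{K}(L^p,L^q)$ for every $f \in Z$.

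For the main step, fix $f \in Z$, set $A = \supp f$ (so $|A| < \infty$), and for $g \in L^p$ decompose $g = g\chi_A + g\chi_{A^c}$. Since $f\chi_{A^c} = 0$, the product $fg\chi_{A^c}$ vanishes, so linearity of $[f,T]$ gives
\[
	[f,T]g = [f,T](g\chi_A) + fT(g\chi_{A^c}), \qquad fT(g\chi_{A^c}) = [f,T](g\chi_{A^c}).
\]
On the first piece, H\"older on the finite-measure set $A$ yields $\|g\chi_A\|_{L^u} \leq |A|^{1/u - 1/p}\|g\|_{L^p}$ (valid for all $0 < u \leq p$), so the $L^u \to L^q$ compactness in \eqref{eq:extrap2} makes $\{[f,T](g\chi_A): \|g\|_{L^p}\leq 1\}$ relatively compact in $L^q$. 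On the second piece, the $L^p \to L^e$ compactness in \eqref{eq:extrap2} applied to $g\chi_{A^c}$ (which satisfies $\|g\chi_{A^c}\|_{L^p} \leq \|g\|_{L^p}$) makes $\{[f,T](g\chi_{A^c}): \|g\|_{L^p}\leq 1\}$ relatively compact in $L^e$; but every function in this family is supported in $A$, and for $e \geq q$ and $|A| < \infty$ the continuous inclusion $L^e(A) \hookrightarrow L^q(A)$ transports this relative compactness into $L^q$. Combining the two pieces shows $\{[f,T]g : \|g\|_{L^p} \leq 1\}$ is relatively compact in $L^q$, i.e.\ $[f,T] \in \mathcal{K}(L^p,L^q)$.

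The main (and essentially only) obstacle is spotting the right decomposition. Once one splits at $\supp f$, the two endpoint hypotheses in \eqref{eq:extrap2} are tailor-made for the two pieces: $u \leq p$ activates the H\"older embedding on $A$ for the first piece, and $e \geq q$ activates the finite-measure Lebesgue embedding on $A$ for the second. No interpolation theory is invoked, which is consistent with the lemma being described as elementary and extending without difficulty to quasi-Banach exponents.
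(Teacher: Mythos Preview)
Your proof is correct and follows essentially the same approach as the paper: approximate $b$ by $f\in Z$, split along $\supp f$, and feed the pieces into the two compactness hypotheses via H\"older on a set of finite measure. The only cosmetic difference is that the paper decomposes $T = 1_FT1_F + 1_FT1_{F^c} + 1_{F^c}T1_F + 1_{F^c}T1_{F^c}$ into four pieces and commutes each with $f$, whereas you split only the input $g = g\chi_A + g\chi_{A^c}$ and observe that the second piece $[f,T](g\chi_{A^c}) = fT(g\chi_{A^c})$ is already supported in $A$; this collapses the paper's four cases into two but is the same argument. Your identification of the $\mathcal{K}(L^q,L^q)$ typo (for $\mathcal{K}(L^p,L^q)$) is also correct and consistent with how the lemma is applied later.
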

\begin{proof}
	As  $b\in Y^{p,q}\subset X^{p,q},$ for any $\varepsilon>0,$ there exists $f\in Z \subset X^{p,q}$ so that  by \eqref{eq:extrap1},
	\begin{align*}
		\Norm{[b,T]- [f,T]}{L^p\to L^q} =\Norm{[b-f,T]}{L^p\to L^q} \lesssim \Norm{b-f}{X^{p,q}} < \varepsilon.
	\end{align*}
	As compact operators form a closed subspace of bounded operators, it is enough to show that $[f,T]\in \mathcal{K}(L^p, L^q).$ 
	Let  $F := \supp(f)$ and by linearity of $T$ write
	\begin{align*}
		\begin{split}
			T = 1_FT1_F+1_FT1_{F^c}+1_{F^c}T1_F+1_{F^c}T1_{F^c}.
		\end{split}
	\end{align*}
	Provided each of the four terms commuted with $f$ is compact, the proof would be concluded.
	First, there holds that $[f,1_{F^c}T1_{F^c}] =  f1_{F^c}T1_{F^c}-1_{F^c}Tf1_{F^c}= 0,$ which is compact.  For the two terms in the middle, factor their commutators out as 
	\begin{align*}
		L^p\overset{1_F}{\longrightarrow} L^{u} \overset{[f,T]}{\longrightarrow} L^{q} \overset{1_{F^c}}{\longrightarrow} L^q \quad\mbox{and}\quad L^p\overset{1_{F^c}}{\longrightarrow} L^p \overset{[f,T]}{\longrightarrow} L^e \overset{1_F}{\longrightarrow} L^q.
	\end{align*}
	The multiplier $1_F$ is $L^p$-to-$L^u$ and $L^{e}$-to-$L^q$ bounded by H\"older's inequality since $q\leq e,$ $u\leq p$ and $|F|<\infty.$ The multiplier $1_{F^c}$ is $L^q$-to-$L^q$ and $L^p$-to-$L^p$ bounded trivially, as a bounded multiplier. The commutators in the diagrams are compact by \eqref{eq:extrap2}, since $f\in Z.$ As compactness is preserved by composing with bounded maps, both diagrams describe a compact map. Lastly $[f,1_FT1_F] \in  \calK(L^p,L^q),$ by either of the above diagrams. 
\end{proof}

\section{Applications}\label{sect:apply}
We let $X^{p,q}(\R^n)$ and $Y^{p,q}(\R^n)$ be as on the lines \eqref{eq:comm:bdd}, \eqref{eq:comm:comp} and $Z = C^{\infty}_c(\R^n),$ cf. formulation of Lemma \ref{lem:extranice}. Let us recall that a linear operator $T$ is a Calder\' on-Zygmund operator if it is bounded on $L^2(\R^n)$ and has the off-support representation
\begin{align*}
	Tf(x) = \int_{\R^n} K(x,y)f(y)\ud y,\qquad x\not\in\supp(f),
\end{align*}
with a kernel $K$ satisfying the size estimate $|K(x,y)|\lesssim |x-y|^{-n},$ and whenever $|x-x'|\leq 1/2|x-y|$ the smoothness estimate
\begin{align*}
	|K(x,y) - K(x',y)| +  |K(y,x) - K(y,x')| \lesssim \omega\big(|x-x'|/|x-y|\big) |x-y|^{-n}.
\end{align*}
The modulus of continuity is assumed to satisfy the Dini condition $\int_0^1\omega(t)/t\ud t < \infty.$
We denote this class of SIOs by $\op{CZ}^{\omega}(\R^n).$ 
The other class we consider are rough SIOs
\begin{align*}
	T_{\Omega}f(x) = \lim_{\varepsilon\to 0} \int_{|x-y|>\varepsilon} \frac{\Omega(x-y)}{|x-y|^n}f(y)\ud y,
\end{align*}
where the symbol $\Omega$ is assumed to be integrable and have zero mean on $\mathbb{S}^{n-1}$ and is extended to act on $\mathbb{R}^n$ by $\Omega(\lambda \cdot ) = \Omega(\cdot),$ for all $\lambda >0.$
The class $\op{RSIO}^s(\R^n)$ consists of those $T_{\Omega}$ with $\Omega\in L^s(\mathbb{S}^{n-1}).$ Provided $\Omega\in L^s(\mathbb{S}^{n-1}),$ for some $s>1,$ then $T_{\Omega}$ is $L^p$-to-$L^p$ bounded, for all $p\in (1,\infty),$ see e.g. the book of Duoandikoetxea \cite{Duoandikoetxea2001}.

Now we may precisely formulate the three distinct results on the line \eqref{eq:comm:comp} as the following theorems.
\begin{thm}[Guo et al. \cite{GHWY21}; Hyt\"onen et al. \cite{HOS2023}]\label{thm:GuoHos} Let $p,q\in (1,\infty)$ and $\alpha(p,q)\in (0,1).$ Let $T\in \op{CZ}^{\omega}(\R^n)\cup \op{RSIO}^s(\R^n),$ for some $s>p'.$ If $b\in \VMO^{\alpha(p,q)}(\R^n),$ then $[b,T] \in \calK(L^p,L^q).$
\end{thm}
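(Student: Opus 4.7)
The plan is to reduce the problem to the case of smooth compactly supported symbols via approximation, and then to exploit a support-based decomposition of $T$. First, observe that both the boundedness \eqref{eq:comm:bdd} and the identification $\VMO^{\alpha(p,q)}(\R^n) = \overline{C^{\infty}_c(\R^n)}^{\BMO^{\alpha(p,q)}(\R^n)}$ from \eqref{eq:Ypq=Testing} are available for the operators under consideration. Given $b \in \VMO^{\alpha(p,q)}(\R^n)$ and $\varepsilon > 0$, choose $f \in C^{\infty}_c(\R^n)$ with $\|b-f\|_{\BMO^{\alpha(p,q)}} < \varepsilon$. Then \eqref{eq:comm:bdd} gives
\[
\|[b,T]-[f,T]\|_{L^p(\R^n) \to L^q(\R^n)} = \|[b-f,T]\|_{L^p \to L^q} \lesssim \varepsilon.
\]
Since $\calK(L^p,L^q)$ is closed in the operator-norm topology, it suffices to show $[f,T] \in \calK(L^p(\R^n), L^q(\R^n))$ for every $f \in C^{\infty}_c(\R^n)$.

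Fix such $f$, set $F := \supp(f)$, and apply the splitting
\[
T = 1_F T 1_F + 1_F T 1_{F^c} + 1_{F^c} T 1_F + 1_{F^c} T 1_{F^c}
\]
as in the proof of Lemma \ref{lem:extranice}. The commutator of $f$ with the tail--tail piece vanishes identically. Since $f 1_{F^c} = 0$, the two cross terms reduce to $1_F f T 1_{F^c}$ and $-1_{F^c} T f 1_F$; these are off-support integral operators whose kernels obey the respective size estimates on a set where source and target are separated by $\dist(\cdot, F) > 0$. Combined with $|F| < \infty$ and H\"older's inequality (which is effective because $q > p$), standard kernel truncation and equicontinuity arguments render these maps compact from $L^p(\R^n)$ to $L^q(\R^n)$.

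The essential content lies in the diagonal term $[f, 1_F T 1_F]$. Both input and output are then localized to the bounded set $F$, so the natural route is the Fr\'echet--Kolmogorov criterion in $L^q(F)$: tightness is automatic, and one must establish equicontinuity of translates of $(1_F T 1_F)g$ uniformly over the unit ball of $L^p(\R^n)$. For $T \in \op{CZ}^{\omega}(\R^n)$ the kernel modulus of continuity $\omega$ with Dini condition, together with the smoothness of $f$, supplies this equicontinuity directly; this is the strategy of Guo et al.\ \cite{GHWY21}. The principal obstacle is the rough case $T \in \op{RSIO}^s(\R^n)$, where $\Omega(x-y)/|x-y|^n$ lacks pointwise smoothness; the remedy, implemented in \cite{HOS2023}, is to approximate $\Omega \in L^s(\mathbb{S}^{n-1})$ by smooth symbols $\Omega_k$ in $L^s$-norm, apply the smooth argument to each $T_{\Omega_k}$, and control the operator-norm error uniformly via \eqref{eq:comm:bdd} applied to $T_{\Omega - \Omega_k}$, closing compactness under norm limits.
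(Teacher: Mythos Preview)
Your approach diverges from the paper's. After the common reduction to $f\in C^\infty_c(\R^n)$, the paper does \emph{not} run a direct Fr\'echet--Kolmogorov argument. Instead it takes Theorem \ref{thm:HLTY} (the $q<p$ case) as input to get $[f,T]\in\calK(L^p,L^{q_0})$ for an auxiliary $q_0\in(1,q)$, pairs this with boundedness $[f,T]:L^p\to L^{q_1}$ for some $q_1>q$ with $\alpha(p,q_1)\in(0,1)$, and then applies the compactness interpolation Lemma \ref{lem:InterComp}. That is the whole mechanism of the note: the three exponent regimes are shown to imply one another by soft means, and the direct kernel analysis you sketch is exactly what is being bypassed.

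Your outline is in the spirit of the original proofs in \cite{GHWY21,HOS2023}, but two of the heuristics are off. First, ``H\"older's inequality is effective because $q>p$'' is backwards: on a set with $|F|<\infty$ one has $L^q(F)\hookrightarrow L^p(F)$ for $q>p$, not the reverse, so finiteness of $|F|$ does not upgrade $L^p$-control of the output to $L^q$. Second, $F$ and $F^c$ are not separated---$\dist(F,F^c)=0$---so the cross pieces $fT1_{F^c}$ and $1_{F^c}Tf$ are still singular near $\partial F$ and are not even a priori bounded $L^p\to L^q$ individually (only the full commutator is, via the cancellation encoded in \eqref{eq:comm:bdd}). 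The four-term splitting of Lemma \ref{lem:extranice} works in the paper only because hypothesis \eqref{eq:extrap2} supplies compactness at auxiliary exponents, which the bounded multipliers $1_F,1_{F^c}$ then transport; you do not have that input here. The direct proofs in \cite{GHWY21,HOS2023} treat $[f,T]$ as a whole and use the smoothness of $f$ together with the commutator cancellation, rather than this splitting.
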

\begin{thm}[Uchiyama \cite{Uch1978}]\label{thm:Uchiyama} Let $T\in \op{CZ}^{\omega}(\R^n)\cup \op{RSIO}^s(\R^n),$ for some $s>1,$ and let $p\in(1,\infty).$ If $b\in \op{VMO}(\R^n),$ then $[b,T]\in\calK(L^p(\R^n),L^p(\R^n)).$
\end{thm}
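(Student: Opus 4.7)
The plan is to apply Lemma \ref{lem:extranice} with both of the lemma's exponents set equal to the target exponent $p$, so that in the notation fixed at the start of Section \ref{sect:apply} one has $X^{p,p}(\R^n) = \op{BMO}(\R^n)$, $Y^{p,p}(\R^n) = \op{VMO}(\R^n)$ and $Z = C^\infty_c(\R^n)$. The conclusion of the lemma then reads as precisely the required compactness $[b,T] \in \mathcal{K}(L^p(\R^n), L^p(\R^n))$ for $b \in \op{VMO}(\R^n)$.

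The diagonal boundedness hypothesis \eqref{eq:extrap1} amounts to the $q=p$ instance of \eqref{eq:comm:bdd}, which is Coifman--Rochberg--Weiss for $T \in \op{CZ}^\omega(\R^n)$ and a standard consequence of sparse or kernel methods for $T_\Omega \in \op{RSIO}^s(\R^n)$ with $s>1$. For the off-diagonal compactness \eqref{eq:extrap2} I would pick $p_0 < p < p_1$ close enough to $p$ that both $\alpha(p_0,p) = n(1/p_0 - 1/p)$ and $\alpha(p,p_1) = n(1/p - 1/p_1)$ lie in $(0,1)$. Since any $f \in C^\infty_c(\R^n)$ is Lipschitz and compactly supported, it lies in $\op{VMO}^\alpha(\R^n)$ for every $\alpha \in (0,1)$, so Theorem \ref{thm:GuoHos} applied to the exponent pairs $(p,p_1)$ and $(p_0,p)$ yields $[f,T] \in \mathcal{K}(L^p,L^{p_1}) \cap \mathcal{K}(L^{p_0},L^p)$. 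Taking $e=p_1$ and $u=p_0$ in Lemma \ref{lem:extranice} then closes the argument.

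In the Calder\'on--Zygmund setting this is all there is to check. The hard part, and the main obstacle, is the rough case: Theorem \ref{thm:GuoHos} requires $s$ to exceed the conjugate of the domain exponent, i.e.\ $s>p'$ for the factor $L^p \to L^{p_1}$ and $s > p_0'$ for the factor $L^{p_0} \to L^p$. Since $p_0 < p$ forces $p_0' > p'$, the second condition is the stricter one, and the direct argument above goes through provided $p > s'$ (choose $p_0 \in (s',p)$). To reach the full range $p \in (1,\infty)$ under only $s > 1$ I would invoke the duality $[b,T] \in \mathcal{K}(L^p,L^p) \Leftrightarrow [b,T^*] \in \mathcal{K}(L^{p'},L^{p'})$, noting that $T^* = T_{\widetilde \Omega} \in \op{RSIO}^s(\R^n)$; this reduces the regime $p < s$ (equivalently $p' > s'$) to the one already handled. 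The residual interval $p \in [s,s']$, which is nonempty only when $s < 2$, would then be filled in by interpolation of compact operators, as alluded to by Lemma \ref{lem:InterComp} in the introduction.
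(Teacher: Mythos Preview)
Your argument is correct, and in the Calder\'on--Zygmund case it is verbatim the paper's proof. The difference is in how you handle rough $T_\Omega$ with $\Omega\in L^s(\mathbb{S}^{n-1})$, $s>1$. You correctly observe that the hypothesis $s>p_0'$ in Theorem \ref{thm:GuoHos} obstructs a direct application of Lemma \ref{lem:extranice} for small $p$, and you repair this by combining the range $p>s'$ (direct), the range $p<s$ (Schauder duality, using $T_\Omega^*=T_{\widetilde\Omega}\in\op{RSIO}^s$), and compact interpolation (Lemma \ref{lem:InterComp}) to bridge the possible gap $[s,s']$ when $s\le 2$. This works.

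The paper takes a shorter route: before invoking Lemma \ref{lem:extranice} it approximates $\Omega$ in $L^s(\mathbb{S}^{n-1})$ by a zero-mean $\Omega'\in L^\infty(\mathbb{S}^{n-1})$, using that $\|[b,T_{\Omega-\Omega'}]\|_{L^p\to L^p}\lesssim\|\Omega-\Omega'\|_{L^s}$. After this reduction one may assume $T\in\op{RSIO}^\infty(\R^n)$, so the constraint $s>p_0'$ in Theorem \ref{thm:GuoHos} is vacuous and the choice of $u<p<e$ is unobstructed. This keeps the implication $\text{Thm.~\ref{thm:GuoHos}}\Rightarrow\text{Thm.~\ref{thm:Uchiyama}}$ entirely within the elementary Lemma \ref{lem:extranice}, which is the paper's point on line \eqref{eq:Chain1}; your version achieves the same conclusion but already imports the compact-interpolation machinery that the paper reserves for the reverse implications \eqref{eq:Chain2}.
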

\begin{thm}[Hyt\"onen et al. \cite{HLTY2023}]\label{thm:HLTY} Let $T\in \op{CZ}^{\omega}(\R^n)\cup \op{RSIO}^s(\R^n),$ for some $s>1,$ and let $p,q\in (1,\infty)$ be such that $q<p.$ If $b\in \dot{L}^{r(p,q)}(\R^n),$ then $[b,T]\in\calK(L^p(\R^n),L^q(\R^n)).$
\end{thm}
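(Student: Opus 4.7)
My plan is to derive Theorem~\ref{thm:HLTY} from Uchiyama's Theorem~\ref{thm:Uchiyama} by a single application of the extrapolation principle Lemma~\ref{lem:extranice}, exactly as advertised in the introduction. I instantiate the abstract set-up with $X^{s,t}(\R^n) = \dot{L}^{r(s,t)}(\R^n)$ and $Z = C_c^\infty(\R^n)$. Since $q < p$ forces $r(p,q) \in [1,\infty)$, the density of test functions in $L^{r(p,q)}(\R^n)$ gives $Y^{p,q}(\R^n) = \dot{L}^{r(p,q)}(\R^n)$. The off-diagonal boundedness hypothesis~\eqref{eq:extrap1} is the known result of Hyt\"{o}nen \cite{HyLpq2021}, valid for both the classes $\op{CZ}^\omega$ and $\op{RSIO}^s$ with $s > 1$.

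For the compactness hypothesis~\eqref{eq:extrap2}, the key move is the trivial choice of auxiliary exponents $e = p$ and $u = q$: the range constraints $e \in [q,\infty]$ and $u \in (0,p]$ then hold precisely because $q < p$. With this choice, \eqref{eq:extrap2} degenerates to the pair of \emph{diagonal} compactness statements $[f,T] \in \mathcal{K}(L^p, L^p) \cap \mathcal{K}(L^q, L^q)$ for $f \in C_c^\infty(\R^n)$. Since any such $f$ is bounded and Lipschitz with compact support, each of the three mean-oscillation limits in the definition of $\op{VMO}(\R^n)$ vanishes trivially, giving $C_c^\infty(\R^n) \subset \op{VMO}(\R^n)$, so Theorem~\ref{thm:Uchiyama} supplies $[f,T] \in \mathcal{K}(L^r, L^r)$ for every $r \in (1,\infty)$, in particular for $r \in \{p, q\}$.

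Both hypotheses verified, Lemma~\ref{lem:extranice} delivers the claim $[b,T] \in \mathcal{K}(L^p, L^q)$ for $b \in \dot{L}^{r(p,q)}(\R^n)$. There is essentially no obstacle once the extrapolation principle is available: the argument is bookkeeping, and the single observation carrying the entire deduction is that the inequality $q < p$ is exactly what makes the choice $(e,u) = (p,q)$ admissible, thereby collapsing the off-diagonal compactness input required by \eqref{eq:extrap2} to the diagonal case handled by Uchiyama.
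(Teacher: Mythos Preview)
Your proof is correct and follows essentially the same route as the paper: the same instantiation of Lemma~\ref{lem:extranice} with $Z = C_c^\infty(\R^n)$, $X^{p,q} = \dot L^{r(p,q)}(\R^n)$, and the same key choice $(e,u)=(p,q)$, reducing the required off-diagonal input~\eqref{eq:extrap2} to the diagonal compactness supplied by Theorem~\ref{thm:Uchiyama}. The only cosmetic difference is that the paper justifies~\eqref{eq:extrap1} directly from H\"older's inequality and the $L^v$-to-$L^v$ boundedness of $T$ (using $[c,T]=0$ for constants $c$) rather than citing~\cite{HyLpq2021}.
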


We begin by verifying the following chain of implications
\begin{align}\label{eq:Chain1}
	\underset{``q>p"}{	\mbox{(Thm. \ref{thm:GuoHos})}} \overset{\mbox{Lem. \ref{lem:extranice}}}{\Longrightarrow }  \underset{``q=p"}{	\mbox{(Thm. \ref{thm:Uchiyama})}}\overset{\mbox{Lem. \ref{lem:extranice}}}{\Longrightarrow } \underset{``q<p"}{	\mbox{(Thm. \ref{thm:HLTY})}}.
\end{align}

\begin{proof}[Proof of Theorem \ref{thm:Uchiyama} assuming Theorem \ref{thm:GuoHos}] 
		When $T= T_{\Omega}\in  \op{RSIO}^s(\R^n)$ we take $\Omega'\in L^{\infty}(\mathbb{S}^{n-1})$ with zero mean so that $\| [b, T_{\Omega}-T_{\Omega'}]\|_{L^p\to L^q}\lesssim \| \Omega- \Omega' \|_{L^s(\mathbb{S}^{n-1})} \leq  \varepsilon.$ Thus, after approximation, we can assume that $T= T_{\Omega}\in  \op{RSIO}^{\infty}(\R^n)$
	
	Now we verify that everything falls into the framework of Lemma \ref{lem:extranice}. Let $Z = C^{\infty}_c(\mathbb{R}^n)$ and $X^{p,p} = X^{p,q} := \BMO(\R^n),$ since $p=q.$ Then, Uchiyama \cite[Lemma 3.]{Uch1978} states that $Y^{p,p}(\mathbb{R}^n) = \VMO(\R^n).$
	That \eqref{eq:extrap1} holds for rough SIOs with bounded symbols is contained in \cite{CRW}, while for CZOs with Dini kernels in Lerner et al. \cite{LOR1}.
	It remains to check that there exists $e\in [q,\infty]$ and $u\in (0,p]$ so that  \eqref{eq:extrap2} holds.
	If $s,t\in (1,\infty)$ are such that $\alpha(s,t)\in (0,1),$ then $Z\subset \op{VMO}^{\alpha(s,t)}$ (trivially). Thus the inclusion $[f, T] \in \mathcal{K}(L^s,L^t)$ for $f\in Z$ follows from Theorem \ref{thm:GuoHos}. The proof is concluded by choosing $1<u<p=q<e<\infty$ so that $\alpha(u,p), \alpha(p,e) \in (0,1),$ which is clearly possible. 
\end{proof}

\begin{proof}[Proof of Theorem \ref{thm:HLTY} assuming Theorem \ref{thm:Uchiyama}] Denote $r := r(p,q)$ and let $Z := C^{\infty}_c(\R^n)$ and $X^{p,q} := \dot L^r(\R^n).$ Since $Z$ is dense in $\dot L^r(\mathbb{R}^n),$ in fact $b\in Y^{p,q}.$ By the $L^v$-to-$L^v$ boundedness of $T,$ for all $v\in(1,\infty),$ and H\"older's inequality, the bound \eqref{eq:extrap1} is immediate.
	It remains to make the choice $e := p \in (q,\infty)$ and $u:= q \in (0,p)$ and set $Y^{p,e} = Y^{u,q} = \op{VMO}(\mathbb{R}^n).$ Since $Z\subset  \op{VMO}(\mathbb{R}^n)$ (trivially), the inclusion \eqref{eq:extrap2} follows by Theorem \ref{thm:Uchiyama}. 
\end{proof}
Next using Theorem \ref{thm:InterComp} we show that the three Theorems \ref{thm:GuoHos}, \ref{thm:Uchiyama} and \ref{thm:HLTY} all follow directly from each other (in particular reversing the chain of implications \eqref{eq:Chain1}). There are six implications to check and due to the structure of the proofs to follow we split the verification of these implications according to the following displays
\begin{align}\label{eq:Chain2}
	\underset{``q>p"}{	\mbox{(Thm. \ref{thm:GuoHos})}} \mbox{ and } 	\underset{``q=p"}{	\mbox{(Thm. \ref{thm:Uchiyama})}} \overset{\mbox{Thm. \ref{thm:InterComp}}}{\Longleftarrow } \underset{``q<p"}{	\mbox{(Thm. \ref{thm:HLTY})}} \mbox{ or } 	\underset{``q=p"}{	\mbox{(Thm. \ref{thm:Uchiyama})}}
\end{align}
and
\begin{align}\label{eq:Chain3}
	\underset{``q>p"}{	\mbox{(Thm. \ref{thm:GuoHos})}} \mbox{ or } 	\underset{``q=p"}{	\mbox{(Thm. \ref{thm:Uchiyama})}} \overset{\mbox{Thm. \ref{thm:InterComp}}}{\Longrightarrow } \underset{``q<p"}{	\mbox{(Thm. \ref{thm:HLTY})}} \mbox{ and } 	\underset{``q=p"}{	\mbox{(Thm. \ref{thm:Uchiyama})}}.
\end{align}

\begin{proof}[Proof of \eqref{eq:Chain2}, i.e. of Theorems \ref{thm:GuoHos} and \ref{thm:Uchiyama} assuming Theorem \ref{thm:HLTY} or  \ref{thm:Uchiyama}]
Fix $p,q\in (1,\infty)$ such that $\alpha(p,q)\in [0,1),$ i.e. $q\geq p,$ in particular.  Let $b\in \op{\VMO}^{\alpha(p,q)} = Y^{p,q}(\mathbb{R}^n),$ where the identity is due to Uchiyama \cite{Uch1978}, when $q= p,$ and due to Guo et al. \cite{GHWY21}, when $q>p.$ By $b\in Y^{p,q}(\mathbb{R}^n)$ and \eqref{eq:comm:bdd} find $f\in C^{\infty}_c(\mathbb{R}^n)$ so that 
\begin{align}
	\|[b-f, T]\|_{L^p\to L^q}\lesssim \|b-f\|_{\operatorname{BMO}^{\alpha(p,q)}} \leq \varepsilon.
\end{align}
Thus it is enough to show that $[f,T]\in \mathcal{K}(L^p,L^q),$ which we do by Theorem \ref{thm:InterComp}. 

Let us first assume that Theorem \ref{thm:HLTY} holds.
Let $q_0 \in (1,p)$ be arbitrary. Let $q_1\in (q,\infty )$ be such that $\alpha(p,q_1)\in (0,1),$  which is clearly possible by $\alpha(p,q)\in [0,1).$ Since $C^{\infty}_c \subset \dot{L}^{r(p,q_0)}$ (since $1<q_0< p$) it follows from Theorem \ref{thm:HLTY} that $[f,T]\in \mathcal{K}(L^{p},L^{q_0}).$ On the other hand since $C^{\infty}_c\subset \op{BMO}^{\alpha(p,q_1)}$ it follows that $[f,T]$ is $L^p\to L^{q_1}$ bounded. Since $q\in (q_0,q_1)$ (by $q_0<p\leq q < q_1$) clearly there exists $\theta\in (0,1)$ so that $1/q = \theta/q_0+(1-\theta)/q_1,$ thus  $[f,T]\in \mathcal{K}(L^p,L^q),$ by Theorem \ref{thm:InterComp}.

Then, assume that Theorem  \ref{thm:Uchiyama} holds.
Let $q_0 = p.$
Let $q_1\in (q,\infty )$ be such that $\alpha(p,q_1)\in (0,1),$  which is clearly possible by $\alpha(p,q)\in [0,1).$ Since $C^{\infty}_c \subset \operatorname{VMO},$ it follows from Theorem \ref{thm:Uchiyama} that $[f,T]\in \mathcal{K}(L^p,L^{q_0})$ (since $q_0 =p$). On the other hand since $C^{\infty}_c\subset \op{BMO}^{\alpha(p,q_1)}$ it follows that $[f,T]$ is $L^p\to L^{q_1}$ bounded. Since $q\in (q_0,q_1)$ (by $q_0<p\leq q < q_1$) clearly there exists $\theta\in (0,1)$ so that $1/q = \theta/q_0+(1-\theta)/q_1,$   thus  $[f,T]\in \mathcal{K}(L^p,L^q),$ by Theorem \ref{thm:InterComp}.
\end{proof}

\begin{proof}[Proof of \eqref{eq:Chain3}, i.e. of Theorems \ref{thm:Uchiyama} and \ref{thm:HLTY} assuming Theorem \ref{thm:Uchiyama} or  \ref{thm:GuoHos}]
	Fix $p,q\in (1,\infty)$ such that $\alpha(p,q)\leq 0,$ i.e. $q\leq p.$ If $q=p,$ let $b\in \op{\VMO} = Y^{p,q}(\mathbb{R}^n),$ where the identity is due to Uchiyama \cite{Uch1978}; and if $q<p,$ let $b\in \dot{L}^{r(p,q)} = Y^{p,q}(\mathbb{R}^n),$ where the identity is the density of $C^{\infty}_c(\mathbb{R}^n)$ in $L^{s},$ for $s\in [1,\infty).$
	 By $b\in Y^{p,q}(\mathbb{R}^n)$ and by \eqref{eq:comm:bdd}, find $f\in C^{\infty}_c(\mathbb{R}^n)$ so that 
	\begin{align}
		\|[b-f, T]\|_{L^p\to L^q}\lesssim \|b-f\|_{X^{p,q}} \leq \varepsilon.
	\end{align}
	Thus it is enough to show that $[f,T]\in \mathcal{K}(L^p,L^q),$ which we do by Theorem \ref{thm:InterComp}.
	
	Assume first that Theorem \ref{thm:Uchiyama} holds and we show that Theorem \ref{thm:HLTY} holds, thus let $q<p.$
	Let $q_0 = p$  and $q_1\in (1,q)$ be arbitrary. Since $f\in C^{\infty}_c\subset \mathrm{VMO},$ it follows by Theorem \ref{thm:Uchiyama} that $[f,T]\in\mathcal{K}(L^{p},L^{q_0})$ (by $q_0 = p$). On the other hand, since $f\in C^{\infty}_c\subset  \dot{L}^{r(p,q_1)}$ (by $1<q_1<q<p$) it follows that $[f,T]$ is $L^{p}\to L^{q_1}$ bounded. Since $q_0 = p > q > q_1,$ there exists some $\theta\in (0,1)$ so that $1/q = \theta/q_0+(1-\theta)/q_1.$ Hence it follows by Theorem \ref{thm:InterComp} that $[f,T]\in\mathcal{K}(L^p,L^q).$

	Assume then that Theorem \ref{thm:GuoHos} holds and we show that Theorems \ref{thm:Uchiyama} and \ref{thm:HLTY} hold, thus let $q\leq p.$
	Let $q_0 > p$ be such that $\alpha(p,q_0)\in (0,1),$ and let $q_1\in (1,q)$ be arbitrary. Since $f\in C^{\infty}_c\subset \mathrm{VMO}^{\alpha(p,q_0)},$ it follows by Theorem \ref{thm:GuoHos} that $[f,T] \in\mathcal{K}(L^{p},L^{q_0}).$ On the other hand since $f\in C^{\infty}_c\subset \dot{L}^{r(p,q_1)}$ (by $q_1<q\leq p$) there holds that $[f,T]$ is $L^{p}\to L^{q_1}$ bounded. Since $q_0> p  \geq q> q_1,$ there exists some $\theta\in (0,1)$ so that $1/q = \theta/q_0+(1-\theta)/q_1.$ Hence it follows from Theorem \ref{thm:InterComp} that $[f,T]\in\mathcal{K}(L^p,L^q).$ 
\end{proof}
\bibliography{references}
\end{document}